\newtheorem{thm}{Theorem}
\newtheorem{lemma}{Lemma}
\newtheorem{cor}{Corollary}
\newtheorem{prop}{Proposition}
\theoremstyle{definition}
\newcommand{\Q}{\mathbb{Q}}
\def\ONE{\mathds{1}}
\newcommand{\B}{\mathcal{B}}
\let\ge\geqslant
\let\leq\leqslant
\let\geq\geqslant
\author{Jean-Luc Baril}
\author{Sergey Kirgizov}
\author{Vincent Vajnovszki}
\affil{\rm LIB, Université de Bourgogne Franche-Comté\protect\\
  B.P. 47 870, 21078 Dijon Cedex France\protect\\
   {\tt E-mails: \{barjl, sergey.kirgizov, vvajnov\}@u-bourgogne.fr
   }
}
\date{\today}
\title{Asymptotic bit frequency in Fibonacci words}
\begin{document}

\maketitle

\begin{abstract}
It is known that binary words containing no $k$ consecutive 1s are
enumerated by $k$-step Fibonacci numbers. In this note we discuss
the expected value of a random bit in a random word of length $n$
having this property. 
  
\end{abstract}

\section{Introduction}
For $n\geq 0$ and $k\geq 2$, we denote by $\B_n(1^k)$ the set of length $n$ binary words avoiding $k$
consecutive 1s. For example, we have

$$\begin{aligned}
  \B_4(11) & = \{ 0000, 0001, 0010, 0100, 0101, 1000, 1001, 1010 \}, \mbox{ and } \\
  \B_4(111) & = \{ 0000, 0001, 0010, 0011, 0100, 0101, 0110, 1000,
                   1001, 1010, 1011, 1100, 1101 \}.
\end{aligned}$$
It is well known, see Knuth~\cite[p. 286]{knuth3}, that $\B_n(1^k)$ is
enumerated by  the $k$-step Fibonacci numbers, precisely
$|\B_n(1^k)|=f_{n+k,k}$, where $f_{n,k}$ is defined,
following Miles~\cite{miles} as
\begin{equation*}
f_{n,k} = \begin{cases}
0 & \text{if } 0 \leq n \leq k - 2,\\
1 & \text{if }  n = k - 1,\\
\sum_{i=1}^k f_{n-i,k} & \text{otherwise}.
\end{cases}
\label{fib_seq}
\end{equation*}

Denote by $v_{n,k}$ the {\em frequency} (also called {\em popularity})
of 1s in $\B_n(1^k)$, i.e. the total number of 1s in all words of
$\B_n(1^k)$. For instance, $v_{4,2} = 10$ and $v_{4,3} = 22$.  The
ratio of frequency of 1s to the overall number of bits in words of
$\B_n(1^k)$ is $$\alpha_{n,k} = \frac{v_{n,k}}{n \cdot |\B_n(1^k)|},$$
and it equals the expected value of a random bit in a random word from
$\B_n(1^k)$. In~\cite{ourfibo}, the authors left without proof the
fact that, for any $k\geq 2$, $\lim_{n \to \infty} \alpha_{n,k}$
converges to a non-zero value as $n$ grows.  This note is devoted to
proving this fact, which apart from its interest {\em en soi} has
practical counterparts.  Indeed, words in $\B_n(1^k)$ play a critical
role in some telecommunication frame synchronization protocols, see
for example~\cite{Bajic,BBPV,CKPW}, or in particular Fibonacci-like
interconnection networks~\cite{EI}.

Our discussion is based on the bivariate generating function

$$
F_k (x,y) = \sum_{n=0}^\infty \sum_{m=0}^{n - \left\lfloor \frac{n}{k} \right\rfloor} a_{n,m} x^n y^m
$$ whose coefficient $a_{n,m}$ equals the number of words from
$\B_n(1^k)$ containing exactly $m$ 1s. For $k=2$ and $k=3$, Table~\ref{tab} presents some values of $a_{n,m}$ for small $n$ and $m$.

\begin{table}[ht]
\begin{tabular}{c|ccccccccc} 
 $ m \backslash n $ & \bf 1 & \bf 2 & \bf 3 & \bf 4 & \bf 5 & \bf 6 & \bf 7 & \bf 8 & \bf 9  \\\hline 
\bf 0 & 1 & 1 & 1 & 1 & 1 & 1 & 1 & 1 & 1 \\
\bf 1 & 1 & 2 & 3 & 4 & 5 & 6 & 7 & 8 & 9 \\
\bf 2 &  &  & 1 & 3 & 6 & 10 & 15 & 21 & 28 \\
\bf 3 &  &  &  &  & 1 & 4 & 10 & 20 & 35 \\
\bf 4 &  &  &  &  &  &  & 1 & 5 & 15 \\
\bf 5 &  &  &  &  &  &  &  &  & 1
\end{tabular}\quad
\begin{tabular}{c|ccccccccc} 
 $ m \backslash n $ & \bf 1 & \bf 2 & \bf 3 & \bf 4 & \bf 5 & \bf 6 & \bf 7 & \bf 8 & \bf 9 \\\hline 
\bf 0 & 1 & 1 & 1 & 1 & 1 & 1 & 1 & 1 & 1  \\
\bf 1 & 1 & 2 & 3 & 4 & 5 & 6 & 7 & 8 & 9  \\
\bf 2 &  & 1 & 3 & 6 & 10 & 15 & 21 & 28 & 36  \\
\bf 3 &  &  &  & 2 & 7 & 16 & 30 & 50 & 77  \\
\bf 4 &  &  &  &  & 1 & 6 & 19 & 45 & 90  \\
\bf 5 &  &  &  &  &  &  & 3 & 16 & 51  \\
\end{tabular}
\caption{First few values of $a_{n,m}$ for $k = 2$ (left) and $k = 3$.}
\label{tab}
\end{table}

\section{Main result}

Proposition~\ref{gf} gives the expression of the generating function
$F_k (x,y)$. Even though this result is already obtained
in~\cite{ourfibo}, in order to make the paper self-contained we give
an alternative proof of it.  Then we calculate the generating
functions for the frequency of 1s and for the overall number of bits
in $\B_n(1^k)$ by means of classic generating functions manipulations
(Propositions \ref{pop1}). Applying Theorem 4.1 from~\cite{book},
after ensuring that its conditions are satisfied, we obtain the main
result of this note, Theorem~\ref{main}.  The evolution of the random
bit expectation for $k = 2$ and $k = 3$ is presented on
Figure~\ref{expectation} for small values of $n$. And numerical
estimations for the limit value ($n \to \infty$) of the random bit
expectation, for small values of $k$ are given in Table~\ref{tab2}.

\begin{figure}[ht]
  \centering
  \scalebox{0.9}{\input{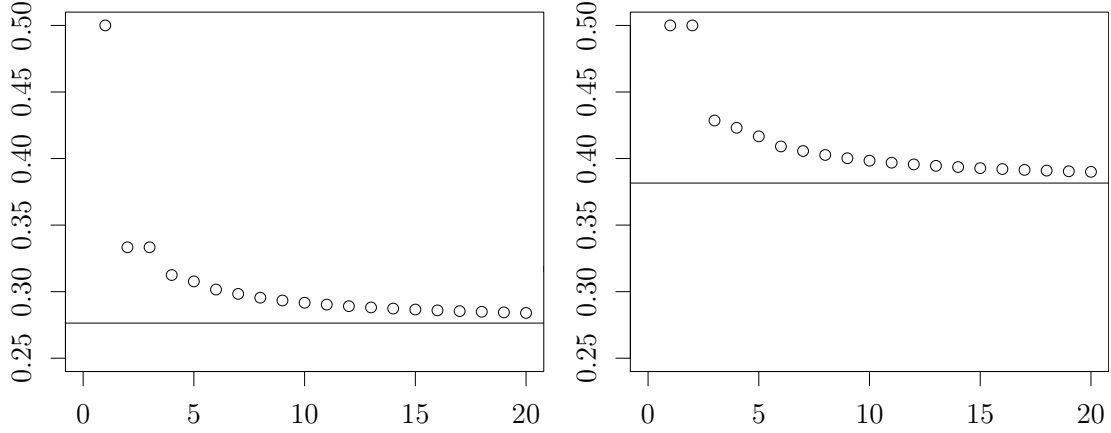}}
  \caption{Expected value of a random bit in a random word from $\B_n(1^2)$ (left)
    and  $\B_n(1^3)$ for small values of $n$.}
  \label{expectation}
\end{figure}

\begin{table}[ht]
  \centering
\raisebox{-.5\height}{
\begin{tikzpicture}[x=1pt,y=1pt]
\definecolor{fillColor}{RGB}{255,255,255}
\path[use as bounding box,fill=fillColor,fill opacity=0.00] (0,0) rectangle (202.36,350.51);
\begin{scope}
\path[clip] ( 49.20, 61.20) rectangle (177.16,301.31);
\definecolor{drawColor}{RGB}{0,0,0}

\path[draw=drawColor,line width= 0.4pt,line join=round,line cap=round] ( 53.94, 70.09) circle (  2.25);

\path[draw=drawColor,line width= 0.4pt,line join=round,line cap=round] ( 64.71,174.85) circle (  2.25);

\path[draw=drawColor,line width= 0.4pt,line join=round,line cap=round] ( 75.48,226.71) circle (  2.25);

\path[draw=drawColor,line width= 0.4pt,line join=round,line cap=round] ( 86.25,255.01) circle (  2.25);

\path[draw=drawColor,line width= 0.4pt,line join=round,line cap=round] ( 97.02,271.10) circle (  2.25);

\path[draw=drawColor,line width= 0.4pt,line join=round,line cap=round] (107.79,280.38) circle (  2.25);

\path[draw=drawColor,line width= 0.4pt,line join=round,line cap=round] (118.56,285.74) circle (  2.25);

\path[draw=drawColor,line width= 0.4pt,line join=round,line cap=round] (129.33,288.82) circle (  2.25);

\path[draw=drawColor,line width= 0.4pt,line join=round,line cap=round] (140.10,290.57) circle (  2.25);

\path[draw=drawColor,line width= 0.4pt,line join=round,line cap=round] (150.88,291.56) circle (  2.25);

\path[draw=drawColor,line width= 0.4pt,line join=round,line cap=round] (161.65,292.11) circle (  2.25);

\path[draw=drawColor,line width= 0.4pt,line join=round,line cap=round] (172.42,292.42) circle (  2.25);
\end{scope}
\begin{scope}
\path[clip] (  0.00,  0.00) rectangle (202.36,350.51);
\definecolor{drawColor}{RGB}{0,0,0}

\path[draw=drawColor,line width= 0.4pt,line join=round,line cap=round] ( 53.94, 61.20) -- (161.65, 61.20);

\path[draw=drawColor,line width= 0.4pt,line join=round,line cap=round] ( 53.94, 61.20) -- ( 53.94, 55.20);

\path[draw=drawColor,line width= 0.4pt,line join=round,line cap=round] ( 75.48, 61.20) -- ( 75.48, 55.20);

\path[draw=drawColor,line width= 0.4pt,line join=round,line cap=round] ( 97.02, 61.20) -- ( 97.02, 55.20);

\path[draw=drawColor,line width= 0.4pt,line join=round,line cap=round] (118.56, 61.20) -- (118.56, 55.20);

\path[draw=drawColor,line width= 0.4pt,line join=round,line cap=round] (140.10, 61.20) -- (140.10, 55.20);

\path[draw=drawColor,line width= 0.4pt,line join=round,line cap=round] (161.65, 61.20) -- (161.65, 55.20);

\node[text=drawColor,anchor=base,inner sep=0pt, outer sep=0pt, scale=  1.00] at ( 53.94, 39.60) {2};

\node[text=drawColor,anchor=base,inner sep=0pt, outer sep=0pt, scale=  1.00] at ( 75.48, 39.60) {4};

\node[text=drawColor,anchor=base,inner sep=0pt, outer sep=0pt, scale=  1.00] at ( 97.02, 39.60) {6};

\node[text=drawColor,anchor=base,inner sep=0pt, outer sep=0pt, scale=  1.00] at (118.56, 39.60) {8};

\node[text=drawColor,anchor=base,inner sep=0pt, outer sep=0pt, scale=  1.00] at (140.10, 39.60) {10};

\node[text=drawColor,anchor=base,inner sep=0pt, outer sep=0pt, scale=  1.00] at (161.65, 39.60) {12};

\path[draw=drawColor,line width= 0.4pt,line join=round,line cap=round] ( 49.20, 93.60) -- ( 49.20,292.78);

\path[draw=drawColor,line width= 0.4pt,line join=round,line cap=round] ( 49.20, 93.60) -- ( 43.20, 93.60);

\path[draw=drawColor,line width= 0.4pt,line join=round,line cap=round] ( 49.20,143.40) -- ( 43.20,143.40);

\path[draw=drawColor,line width= 0.4pt,line join=round,line cap=round] ( 49.20,193.19) -- ( 43.20,193.19);

\path[draw=drawColor,line width= 0.4pt,line join=round,line cap=round] ( 49.20,242.99) -- ( 43.20,242.99);

\path[draw=drawColor,line width= 0.4pt,line join=round,line cap=round] ( 49.20,292.78) -- ( 43.20,292.78);

\node[text=drawColor,rotate= 90.00,anchor=base,inner sep=0pt, outer sep=0pt, scale=  1.00] at ( 34.80, 93.60) {0.30};

\node[text=drawColor,rotate= 90.00,anchor=base,inner sep=0pt, outer sep=0pt, scale=  1.00] at ( 34.80,143.40) {0.35};

\node[text=drawColor,rotate= 90.00,anchor=base,inner sep=0pt, outer sep=0pt, scale=  1.00] at ( 34.80,193.19) {0.40};

\node[text=drawColor,rotate= 90.00,anchor=base,inner sep=0pt, outer sep=0pt, scale=  1.00] at ( 34.80,242.99) {0.45};

\node[text=drawColor,rotate= 90.00,anchor=base,inner sep=0pt, outer sep=0pt, scale=  1.00] at ( 34.80,292.78) {0.50};

\path[draw=drawColor,line width= 0.4pt,line join=round,line cap=round] ( 49.20, 61.20) --
	(177.16, 61.20) --
	(177.16,301.31) --
	( 49.20,301.31) --
	( 49.20, 61.20);
\end{scope}
\begin{scope}
\path[clip] (  0.00,  0.00) rectangle (202.36,350.51);
\definecolor{drawColor}{RGB}{0,0,0}

\node[text=drawColor,anchor=base,inner sep=0pt, outer sep=0pt, scale=  1.00] at (113.18, 15.60) {$k$};

\node[text=drawColor,rotate= 90.00,anchor=base,inner sep=0pt, outer sep=0pt, scale=  1.00] at ( 10.80,181.25) {Limit of the expected bit value};
\end{scope}
\end{tikzpicture}}
\begin{tabular}{c|c|c} 
  $ k $ & Limit of the expected bit value \\\hline
2 & 0.276393202250021 \\
3 & 0.381580077680607 \\
4 & 0.433657112297348 \\
5 & 0.462073883180840 \\
6 & 0.478227505713290 \\
7 & 0.487545982771861 \\
8 & 0.492928265543398 \\
9 & 0.496019724266083 \\
10 & 0.497779940783496 \\
11 & 0.498772398758879 \\
12 & 0.499326557312936 \\
13 & 0.499633184444604 \\
\end{tabular}
\caption{Numerical estimations for the limit of the
  expected value of a random bit in a random word from $\B_n(1^k), n
  \to \infty$.}
\label{tab2}
\end{table}

\begin{prop}[\cite{ourfibo}]
  $$F_k(x,y)=\frac{y\left(1-(xy)^k\right)}{y-xy^2-xy+(xy)^{k+1}}.$$
  \label{gf}
\end{prop}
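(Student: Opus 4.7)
The plan is to derive $F_k$ from a combinatorial decomposition of the language $\bigcup_n \B_n(1^k)$, using $x$ to mark length and $y$ to mark the number of $1$s. The key observation is that splitting a word $w\in\B_n(1^k)$ right after each occurrence of a $0$ produces a unique factorization
$$w = B_1 B_2 \cdots B_r\, T,$$
where each block $B_i$ belongs to $\{0,\,10,\,110,\,\ldots,\,1^{k-1}0\}$ (a maximal run of $1$s of length at most $k-1$ followed by the terminating $0$), and the tail $T$ belongs to $\{\varepsilon,1,11,\ldots,1^{k-1}\}$ (the possibly empty final run of $1$s). Avoidance of $1^k$ is equivalent to saying that every such block and the tail have at most $k-1$ consecutive $1$s, so this establishes a bijection between $\B_n(1^k)$ and sequences of blocks followed by a tail.

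Next, I would translate this decomposition into generating functions via the symbolic method. A single block contributes
$$B(x,y) \;=\; x + x^2 y + x^3 y^2 + \cdots + x^{k} y^{k-1} \;=\; \frac{x\bigl(1-(xy)^k\bigr)}{1-xy},$$
while the tail contributes
$$T(x,y) \;=\; 1 + xy + (xy)^2 + \cdots + (xy)^{k-1} \;=\; \frac{1-(xy)^k}{1-xy}.$$
The sequence-of-blocks-followed-by-a-tail schema then yields
$$F_k(x,y) \;=\; \frac{T(x,y)}{1 - B(x,y)}.$$

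Finally, I would simplify. Multiplying numerator and denominator by $1-xy$ collapses the two closed forms to
$$F_k(x,y) \;=\; \frac{1-(xy)^k}{1 - xy - x + x(xy)^k},$$
and a further multiplication of both numerator and denominator by $y$ gives the advertised
$$F_k(x,y) \;=\; \frac{y\bigl(1-(xy)^k\bigr)}{y - xy^2 - xy + (xy)^{k+1}}.$$
The only delicate point is uniqueness of the decomposition, but this is immediate: the positions of $0$s in $w$ are intrinsic, so splitting right after each $0$ is well defined, and the residual suffix of $1$s is determined. Hence the entire argument is bijective followed by a short algebraic simplification, with no real obstacle.
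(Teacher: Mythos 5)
Your argument is correct and is essentially the paper's own: the paper encodes the same block structure recursively via $\B(1^k)=\ONE_{k-1}\cup\bigcup_{i=0}^{k-1}\bigl(1^i0\cdot\B(1^k)\bigr)$, which yields the functional equation $F_k(x,y)=\sum_{i=0}^{k-1}(xy)^i+F_k(x,y)\sum_{i=0}^{k-1}x^{i+1}y^i$, and solving it is exactly your $F_k=T/(1-B)$. Your sequence-of-blocks-followed-by-a-tail factorization is simply the unrolled form of that recursion, and the final algebraic simplification coincides.
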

\begin{proof}

The set $\B(1^k) = \bigcup_{n=0}^\infty \B_n(1^k)$ respects the
following recursive decomposition
$$\B(1^k)=\ONE_{k-1} \cup
\Bigg(
\bigcup_{i=0}^{k-1} \bigg( 1^i0\cdot \B(1^{k}) \bigg)
\Bigg)
$$ where $\ONE_{k-1} = \bigcup_{i=0}^{k-1} \{1^i\}$ is the set of words in $\B(1^k)$ containing no 0s, and $\cdot$ denotes the
concatenation. Note that the empty word also lies in
$\ONE_{k-1}$.  The claimed generating function is the solution of the
following functional equation
$$F_k(x,y)=\sum_{i=0}^{k-1}x^iy^i+ F_k(x,y)\sum_{i=0}^{k-1}x^{i+1}y^{i}.$$
\end{proof}

In the proof of Theorem \ref{main} we need the following easy to derive results.

\begin{prop} $ $

\begin{itemize}
\item[$\bullet$] $P_k(x) = \frac{\partial F_k(x,y)}{\partial y} \vert_{y = 1}
$ is the generating function where the coefficient of $x^n$ is the frequency of 1s in $\B_n(1^k)$. We have 
$$P_k(x)=\frac{
      x  \cdot \sum_{i=0}^{k - 2} (i + 1) x^i 
}{\left(
  x^k + x^{k-1} + \cdots + x^2 + x - 1
  \right)^2
}.$$
\item[$\bullet$]
 $
  T_k(x) =x \frac{\partial F_k(x,1)}{\partial x} $ is the generating function where the coefficient of
  $x^n$ equals the total number of all bits in $\B_n(1^k)$.
We have
$$T_k(x) =\frac{
      x  \left( \sum_{i=0}^{k - 2} (2 i + 2) x^i
      + \sum_{i=k - 1}^{2 k - 2} (2k - i - 1) x^i \right)
  }
  {\left(
  x^k + x^{k-1} + \cdots + x^2 + x - 1
  \right)^2
}.
$$
\end{itemize}
\label{pop1}
\end{prop}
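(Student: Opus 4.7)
The proof is a direct computation from the closed form of $F_k$ given by Proposition~\ref{gf}; my plan has three steps. The first step is to establish the combinatorial meaning of $P_k$ and $T_k$. Writing $F_k(x,y)=\sum_{n,m}a_{n,m}x^ny^m$, differentiation with respect to $y$ brings down a factor $m$, so setting $y=1$ gives $[x^n]\,\partial_y F_k|_{y=1}=\sum_m m\,a_{n,m}$, which by definition counts 1s across $\B_n(1^k)$ with multiplicity and hence equals $v_{n,k}$. For $T_k$, I would first specialize $y=1$ to obtain $F_k(x,1)=\sum_n |\B_n(1^k)|\,x^n$; the operator $x\,\partial_x$ then multiplies the $x^n$ coefficient by $n$, and since every word of $\B_n(1^k)$ contains $n$ bits, this is the total bit count.

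The second step is the mechanical computation. For $P_k$, writing $F_k=N/D$ with $N=y(1-(xy)^k)$ and $D=y-xy^2-xy+(xy)^{k+1}$, the values $N(x,1)=1-x^k$, $D(x,1)=1-2x+x^{k+1}$, $N_y(x,1)=1-(k+1)x^k$ and $D_y(x,1)=1-2x+(k+1)x^{k+1}$ plug into the quotient rule to yield a rational function whose denominator is $(1-2x+x^{k+1})^2$. For $T_k$ I would differentiate the univariate $F_k(x,1)=(1-x^k)/(1-2x+x^{k+1})$ directly and multiply by $x$. The bridge to the denominator displayed in the statement is the telescoping identity
$$(1-x)\bigl(1-x-x^2-\cdots-x^k\bigr)=1-2x+x^{k+1},$$
which lets me rewrite $(1-2x+x^{k+1})^2=(1-x)^2(x^k+x^{k-1}+\cdots+x-1)^2$, so that a compensating factor $(1-x)^2$ must be absorbed into the numerator.

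The third step is to verify that the resulting numerators match the stated ones. For $P_k$ this amounts to the polynomial identity $x(1-x)^2\sum_{i=0}^{k-2}(i+1)x^i = x-kx^k+(k-1)x^{k+1}$, which follows at once by expanding the telescoping sum; for $T_k$ there is an analogous identity whose right-hand side comes from the quotient-rule expansion of $xF_k(x,1)'$. I do not expect any conceptual obstacle: the proof is dominated by polynomial bookkeeping up to degree $2k+1$, and the only point where sign or coefficient errors become likely is in the numerator of $T_k$, where the coefficient sequence changes character between indices $k-2$ and $k-1$ (hence the piecewise form $(2i+2)$ vs.\ $(2k-i-1)$).
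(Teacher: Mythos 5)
Your plan is correct and is exactly the derivation the paper has in mind (the paper leaves Proposition~\ref{pop1} as an ``easy to derive'' consequence of Proposition~\ref{gf} via classic generating-function manipulations): differentiate in $y$ and set $y=1$ to get the popularity series, apply $x\,\partial_x$ to $F_k(x,1)$ for the total bit count, and then use $(1-x)(1-x-\cdots-x^k)=1-2x+x^{k+1}$ to pass to the denominator $(x^k+\cdots+x-1)^2$. One slip in your listed intermediate values: since $D=y-xy^2-xy+(xy)^{k+1}$, you get $D_y(x,1)=1-3x+(k+1)x^{k+1}$, not $1-2x+(k+1)x^{k+1}$; with the corrected value the quotient rule gives numerator $N_yD-ND_y\big|_{y=1}=x-kx^k+(k-1)x^{k+1}$, which is precisely what your step-3 identity $x(1-x)^2\sum_{i=0}^{k-2}(i+1)x^i=x-kx^k+(k-1)x^{k+1}$ requires (with the uncorrected value you would instead obtain $kx^k(x-1)$ and the check would fail), and the analogous verification for $T_k$ goes through with numerator $x\bigl(2-kx^{k-1}+(k-3)x^k+x^{2k}\bigr)$ before dividing out $(1-x)^2$.
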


 Every root $r$ of a polynomial $h(x)$ of degree $n$ with a non-zero
 constant term corresponds to the root $1/r$ of its negative
 reciprocal $-x^n h(1/x)$.  The denominator of both $P_k(x)$ and
 $T_k(x)$ involves $x^k + x^{k-1} + \cdots + x^2 + x - 1$ and its
 negative reciprocal is $x^k - x^{k-1} - \cdots - x^2 - x - 1$ which
 is known in the literature as Fibonacci polynomial, see for
 instance~\cite{cipu, dubeau, flores, gross, hare, martin, miles,
   miller, wolf} and references therein.  In particular, Dubeau
 proved~\cite[Theorem 1]{dubeau} that its root of the largest modulus
 is $\varphi_k = \lim_{n\to \infty}f_{n+1,k}/f_{n,k}$, the generalized
 golden ratio, and $\varphi_k$ approaches 2 when $k \to
 \infty$~\cite[Theorem 2]{dubeau}.  Wolfram~\cite[Lemma 3.6]{wolf}
 showed that any other root $r$ of the Fibonacci polynomial satisfies
 $3^{-1/k} < |r| < 1$. See Figure~\ref{racines} for an illustration of
 this fact.  Moreover, Corollary 3.8 in \cite{wolf} proves that
 Fibonacci polynomial is irreducible over~$\Q$.  In order to refer
 later to them we summarize these results in the next proposition.

\begin{prop}
  The polynomial $g_k(x)=x^k + x^{k-1} + \cdots + x^2 + x - 1 $ is
  irreducible over~$\Q$, its root of the smallest modulus is unique
  and equal to $1/\varphi_k$.

  \label{roots}
\end{prop}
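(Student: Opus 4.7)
The plan is to deduce the proposition from the cited results on the Fibonacci polynomial $h_k(x) = x^k - x^{k-1} - \cdots - x - 1$, which is the negative reciprocal of $g_k$: one has $h_k(x) = -x^k g_k(1/x)$. Since $g_k(0) = -1 \neq 0$, every root of $g_k$ is nonzero, so the map $r \mapsto 1/r$ is a modulus-reversing bijection between the roots of $g_k$ and those of $h_k$. Both statements of the proposition will reduce, via this bijection, to dual facts about $h_k$.

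For irreducibility, I would invoke Wolfram's Corollary 3.8, which asserts that $h_k$ is irreducible over $\Q$. A hypothetical nontrivial factorization $g_k(x) = p(x)\,q(x)$ in $\Q[x]$ would transport, by the substitution $x \mapsto 1/x$ followed by multiplication by an appropriate monomial, into a nontrivial factorization of $h_k$ of the same total degree, contradicting irreducibility. The nondegeneracy $p(0),q(0) \neq 0$ is automatic from $g_k(0) \neq 0$, so the reciprocal transformation preserves degrees and rationality.

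For the smallest-modulus root, Dubeau's Theorem~1 identifies $\varphi_k$ as the root of $h_k$ of largest modulus, whence $1/\varphi_k$ is a root of $g_k$ of smallest modulus. Uniqueness then follows from Wolfram's Lemma~3.6, which bounds every \emph{other} root $r$ of $h_k$ by $|r| < 1$, combined with the elementary observation that $\varphi_k > 1$: indeed $g_k(0) = -1 < 0$ and $g_k(1) = k - 1 > 0$ place a real root of $g_k$ strictly inside $(0,1)$, whose reciprocal is a real root of $h_k$ strictly greater than $1$, and this root must be $\varphi_k$. Thus $\varphi_k$ is the \emph{strict} maximum in modulus among the roots of $h_k$, so $1/\varphi_k$ is the strict minimum in modulus among the roots of $g_k$.

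The main obstacle is essentially bookkeeping: all the analytic content, the identification of the dominant root, the modulus bound on the remaining roots, and irreducibility, has been established in the cited references. The only care needed is to verify that the reciprocal correspondence transports each of these statements cleanly from $h_k$ to $g_k$, which is ensured by the nonvanishing of $g_k$ at the origin.
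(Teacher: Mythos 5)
Your proposal is correct and follows essentially the same route as the paper: the paper does not give a separate proof of Proposition~\ref{roots} but, exactly as you do, obtains it from the negative-reciprocal correspondence $r \mapsto 1/r$ together with Wolfram's Corollary~3.8 (irreducibility of the Fibonacci polynomial), Dubeau's Theorem~1 (its largest-modulus root is $\varphi_k$), and Wolfram's Lemma~3.6 (every other root has modulus strictly less than~1). Your added observation that $\varphi_k>1$ merely makes the strict-uniqueness bookkeeping explicit.
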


 \begin{figure}[ht]
   \centering
   \scalebox{0.8}{\input{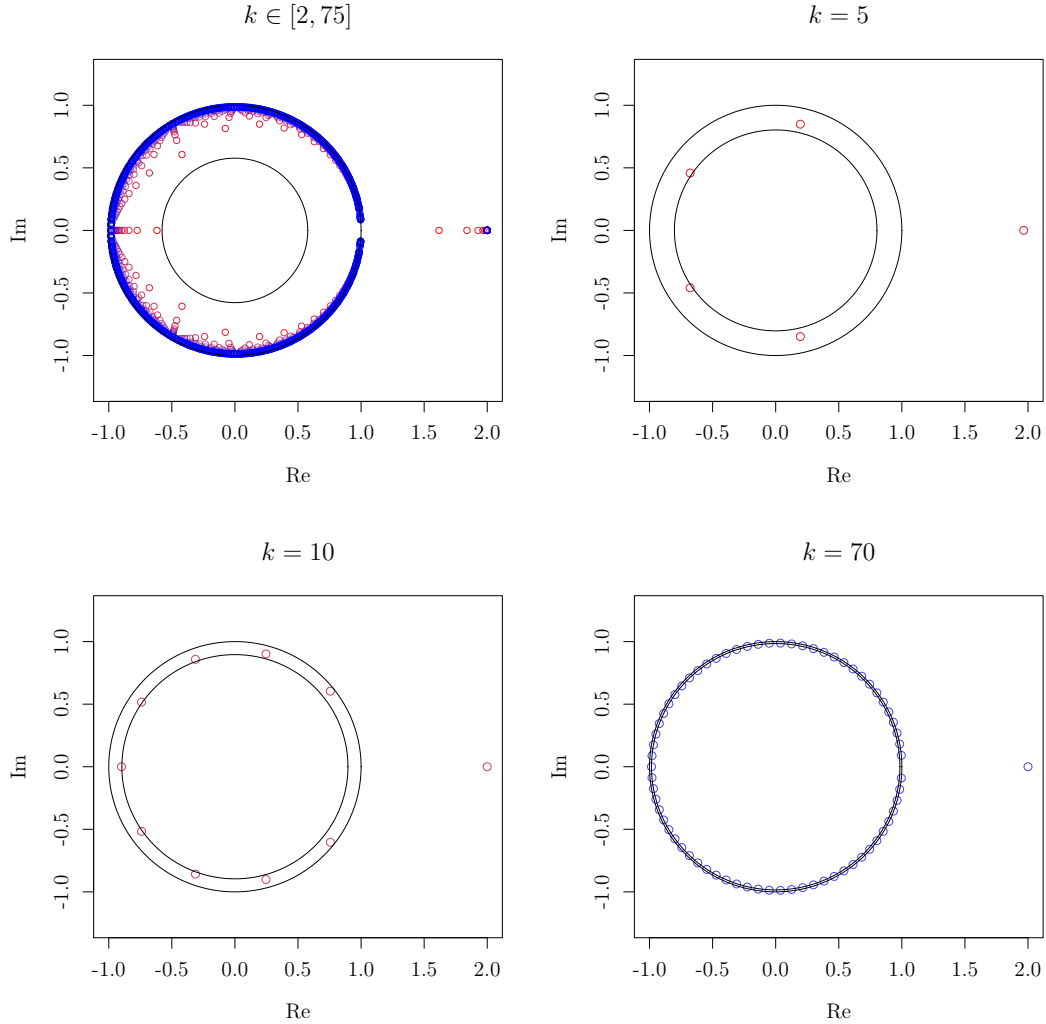}}
   \caption{Roots of the polynomial $x^k - x^{k-1} - \cdots - x^2 - x - 1$ (the negative reciprocal of $g_k(x)$) for certain values of $k$.}
   \label{racines}
 \end{figure}

The next lemma says that both fractions representing $P_k(x)$ and $T_k(x)$ are irreducible.
\begin{lemma}
  The polynomials $\sum_{i = 0}^{k - 2} (i + 1) x^i$ and
  $x^k + x^{k-1} + \cdots + x^2 + x - 1$ are relatively prime; and so are $\sum_{i=0}^{k - 2} (2 i + 2) x^i 
+ \sum_{i=k - 1}^{2 k - 2} (2k - i - 1) x^i$ and $x^k + x^{k-1} + \cdots + x^2 + x - 1$.
\label{le1}
\end{lemma}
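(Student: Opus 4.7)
The strategy is to exploit Proposition~\ref{roots}: since $g_k(x)=x^k+x^{k-1}+\cdots+x-1$ is irreducible over $\mathbb{Q}$, it is a prime element of $\mathbb{Q}[x]$, so coprimality with any polynomial $p(x)$ reduces to checking $g_k\nmid p$. In particular, if $\deg p < k=\deg g_k$ and $p\neq 0$, coprimality is immediate.

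For the first pair this finishes the job at once: $\sum_{i=0}^{k-2}(i+1)x^i$ has degree $k-2<k$ and nonzero constant term $1$, so $g_k$ cannot divide it. For the second pair the polynomial $Q(x)=\sum_{i=0}^{k-2}(2i+2)x^i+\sum_{i=k-1}^{2k-2}(2k-i-1)x^i$ has degree $2k-2\ge k$, so the naive degree comparison fails and I will need to reduce $Q$ modulo $g_k$.

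To do so efficiently, I would first rewrite $F_k(x,1)$ in lowest terms. Summing the geometric series inside $F_k$ gives $F_k(x,1)=(1-x^k)/(1-2x+x^{k+1})$, and the telescoping identity $(x-1)g_k(x)=x^{k+1}-2x+1$ (a quick calculation) together with $g_k(1)=k-1\neq 0$ lets me cancel $x-1$ and obtain $F_k(x,1)=-N(x)/g_k(x)$, where $N(x)=1+x+\cdots+x^{k-1}$. Differentiating, $T_k(x)=xF_k'(x,1)=\dfrac{-x\bigl(N'(x)g_k(x)-N(x)g_k'(x)\bigr)}{g_k(x)^2}$, which (upon matching denominators with the formula in Proposition~\ref{pop1}) identifies $Q(x)=\pm\bigl(N'(x)g_k(x)-N(x)g_k'(x)\bigr)$; a quick check at $k=2,3$ fixes the sign. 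Reducing modulo $g_k$ yields $Q(x)\equiv \mp N(x)g_k'(x)\pmod{g_k(x)}$. Since $g_k$ is irreducible (hence prime), $g_k\mid Q$ would force $g_k\mid N$ or $g_k\mid g_k'$; but both $N$ and $g_k'$ are nonzero polynomials of degree $k-1<k$, so neither is divisible by $g_k$. Therefore $g_k\nmid Q$ and the gcd is trivial.

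The only real obstacle is spotting the simplification $F_k(x,1)=-N(x)/g_k(x)$ via the telescoping identity $(x-1)g_k(x)=x^{k+1}-2x+1$; once $T_k$ is written with denominator $g_k^2$ coming from a fraction with denominator $g_k$ in lowest terms, the mod-$g_k$ argument is routine. The first pair needs no such preparation, since the degree bound handles it directly.
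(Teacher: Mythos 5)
Your proof is correct, and the first half coincides with the paper's (degree comparison plus irreducibility of $g_k$ from Proposition~\ref{roots}). For the second pair, however, you take a genuinely different route. The paper's argument is a one-liner: the polynomial $Q(x)=\sum_{i=0}^{k-2}(2i+2)x^i+\sum_{i=k-1}^{2k-2}(2k-i-1)x^i$ has only positive coefficients, hence no positive real roots, whereas $g_k$ does have one (e.g.\ $1/\varphi_k\in(0,1)$, or simply by the intermediate value theorem since $g_k(0)=-1$ and $g_k(1)=k-1$); so $g_k\nmid Q$, and irreducibility does the rest. You instead uncover the structural identity $F_k(x,1)=-N(x)/g_k(x)$ with $N(x)=1+x+\cdots+x^{k-1}$, differentiate to get $Q=N g_k'-N' g_k$ (the sign needs no numerical check: equating your expression for $T_k$ with the formula of Proposition~\ref{pop1}, both written over the same denominator $g_k^2$, forces it), and reduce modulo $g_k$: divisibility would force $g_k\mid N$ or $g_k\mid g_k'$, impossible by degree. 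Your route is more computational, but it buys an independent derivation of the numerator of $T_k$ as a Wronskian-type expression $N g_k'-N' g_k$, explaining where the formula in Proposition~\ref{pop1} comes from; the paper's positivity argument is shorter and needs nothing beyond inspecting coefficients. Both proofs hinge on the same key input, the irreducibility of $g_k$, to reduce coprimality to the single non-divisibility $g_k\nmid Q$.
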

\proof
The polynomial  $x^k + x^{k-1} + \cdots + x^2 + x - 1$
is irreducible due to
Proposition~\ref{roots}.
It does not divide $\sum_{i = 0}^{k - 2} (i + 1) x^i$ as it has a greater degree. And
it also cannot divide $\sum_{i=0}^{k - 2} (2 i + 2) x^i 
+ \sum_{i=k - 1}^{2 k - 2} (2k - i - 1) x^i$ as the latter does not have any positive real roots.
\endproof

From Propositions~\ref{pop1}, \ref{roots}, Dubeau's
results~\cite{dubeau}, and Lemma~\ref{le1} we have:

\begin{lemma}
Both generating functions $P_k(x)$ and of $T_k(x)$ have the same and
unique pole of the smallest modulus with multiplicity $2$.  The pole
equals $1/\varphi_k$, where $\varphi_k$ is the generalized golden
ratio.
\label{le2}
\end{lemma}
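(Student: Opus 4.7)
The plan is to assemble the lemma from the three building blocks already established in the excerpt: Proposition~\ref{pop1} (explicit rational expressions), Proposition~\ref{roots} (irreducibility and smallest-modulus root of $g_k$), and Lemma~\ref{le1} (coprimality of numerators with denominator). The idea is that once we know the rational functions $P_k$ and $T_k$ are in lowest terms and that $g_k(x)^2$ is their common denominator, the pole structure is forced by the root structure of $g_k$.

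First I would invoke Proposition~\ref{pop1} to write
\[
P_k(x) = \frac{N_P(x)}{g_k(x)^2}, \qquad T_k(x) = \frac{N_T(x)}{g_k(x)^2},
\]
where $N_P$ and $N_T$ are the numerator polynomials displayed there, and $g_k(x)=x^k+x^{k-1}+\cdots+x-1$. By Lemma~\ref{le1}, each of $N_P$ and $N_T$ is coprime with $g_k$; hence both fractions are in lowest terms, and the set of poles (with multiplicities) of $P_k$ and of $T_k$ coincides with the multiset of roots of $g_k(x)^2$. In particular the poles of the two generating functions coincide setwise, and each pole has multiplicity exactly twice its multiplicity as a root of $g_k$.

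Next I would argue that every root of $g_k$ is simple. This follows immediately from Proposition~\ref{roots}: $g_k$ is irreducible over $\mathbb{Q}$, so (as $\mathbb{Q}$ has characteristic zero) it is separable and therefore has no repeated roots. Consequently every pole of $P_k$ and $T_k$ has multiplicity exactly $2$, and the pole set is identical for both functions.

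Finally, the smallest-modulus pole is identified using the second half of Proposition~\ref{roots}: the root of $g_k$ of smallest modulus is unique and equals $1/\varphi_k$, where $\varphi_k$ is the generalized golden ratio given by Dubeau's theorem. Lifting this to $g_k^2$, the pole of smallest modulus of both $P_k$ and $T_k$ is unique, equal to $1/\varphi_k$, and has multiplicity $2$. The only nontrivial ingredients are the irreducibility and smallest-root statements borrowed from Proposition~\ref{roots} and Dubeau's paper; everything else is a routine assembly, so I do not foresee any essential obstacle in this argument.
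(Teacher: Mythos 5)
Your proposal is correct and follows essentially the same route as the paper, which derives Lemma~\ref{le2} precisely by combining Proposition~\ref{pop1}, Proposition~\ref{roots} (irreducibility, hence simple roots and the unique smallest-modulus root $1/\varphi_k$), Dubeau's identification of $\varphi_k$, and the coprimality from Lemma~\ref{le1} to rule out cancellation. The only detail worth making explicit, which you do handle, is that coprimality with $g_k$ together with irreducibility gives coprimality with $g_k^2$, so the multiplicity-$2$ claim is justified.
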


\bigskip

For our main result of this note we need the 
Theorem 4.1 from~\cite{book}:

\noindent{\textbf{\em Theorem 4.1 from~\cite{book}.}}  {\em Assume
  that a rational generating function $\frac{f(x)}{g(x)}$, with $f(x)$
  and $g(x)$ relatively prime and $g(0) \ne 0$, has a unique pole
  $1/\beta$ of the smallest modulus.
  Then, if the multiplicity of $1/\beta$ is $\nu$, we
  have
  $$[x^n]\frac{f(x)}{g(x)} \sim 
  \nu \frac{(-\beta)^\nu f(1/\beta)}{
    g^{(\nu)}(1/\beta)}
  \beta^n n^{\nu-1}.
$$
}

Both $P_k(x)$ and $T_k(x)$ are rational generating functions, and by Lemmas~\ref{le1} and~\ref{le2}
they fulfill the conditions in the above theorem,
so 
$$
\begin{aligned}
  [x^n]P_k(x) & \sim 2 n \varphi_k^{n+2}\cdot
\frac{
      x  \left( \sum_{i=0}^{k - 2} (i + 1) x^i \right)
}{
  \big( (x^k + x^{k-1} + \cdots + x^2 + x - 1 )^2 \big)''
}\Bigg\vert_{x = 1/\varphi_k}\\
[x^n]T_k(x) & \sim 2 n \varphi_k^{n+2}\cdot
\frac{
      x  \left( \sum_{i=0}^{k - 2} (2 i + 2) x^i
      + \sum_{i=k - 1}^{2 k - 2} (2k - i - 1) x^i \right)
      }{
        \big( ( x^k + x^{k-1} + \cdots + x^2 + x - 1 )^2 \big) ''
} \Bigg\vert_{x = 1/\varphi_k}.
\end{aligned}
$$
The expected value of a random bit in a random word from $\B_n(1^k)$ is
$\frac{[x^n] P_k(x)}{[x^n] T_k(x)}$. Taking the limit, we obtain:
  
\begin{thm}
The expected value of a random bit in a random word from $\B_n(1^k)$
  tends to
  $$
  \frac{k x^{k} - k x^{k-1} - x^{k} + 1}{k x^{k} - k x^{k-1} + x^{2 k} - 3 x^{k} + 2} \bigg\vert_{x = 1/\varphi_k}
  \text{ when } n \to \infty,
  $$
  where $\varphi_k = \lim_{n \to \infty} f_{n+1,k}/f_{n,k}$ is the generalized golden ratio,
  in particular $\varphi_2$ is the golden ratio.
  \label{main}
\end{thm}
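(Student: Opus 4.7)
\medskip

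\noindent\textbf{Proof proposal for Theorem~\ref{main}.}

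The plan is to express the expected bit value as a ratio of coefficient sequences and apply the asymptotic estimate already invoked just before the theorem. By definition $\alpha_{n,k}=\frac{v_{n,k}}{n\cdot|\B_n(1^k)|}$; since Proposition~\ref{pop1} identifies $v_{n,k}=[x^n]P_k(x)$ and $n\cdot|\B_n(1^k)|=[x^n]T_k(x)$, we have $\alpha_{n,k}=[x^n]P_k(x)/[x^n]T_k(x)$. First I would observe that Lemmas~\ref{le1} and~\ref{le2} make $P_k$ and $T_k$ fit the hypotheses of Theorem~4.1 of~\cite{book} with the \emph{same} dominant pole $1/\varphi_k$ of multiplicity $\nu=2$, and with the \emph{same} denominator $g_k(x)^2$. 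Consequently the prefactors $\nu(-\beta)^\nu\beta^n n^{\nu-1}=2\varphi_k^{n+2}n$ and the value of $(g_k(x)^2)''$ at $x=1/\varphi_k$ are common to the asymptotics of $[x^n]P_k(x)$ and $[x^n]T_k(x)$, hence cancel in the quotient. The limit therefore reduces to the ratio of the two numerators of Proposition~\ref{pop1} evaluated at $x=1/\varphi_k$:
\[
\lim_{n\to\infty}\alpha_{n,k}
=\frac{\sum_{i=0}^{k-2}(i+1)x^i}{\sum_{i=0}^{k-2}(2i+2)x^i+\sum_{i=k-1}^{2k-2}(2k-i-1)x^i}\bigg\vert_{x=1/\varphi_k}.
\]

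Second, I would reduce this rational expression to the closed form stated in the theorem by summing both arithmetico-geometric sums in closed form. Writing $\sum_{i=0}^{k-2}(i+1)x^i$ as $\frac{d}{dx}\sum_{j=1}^{k-1}x^j$ yields $\frac{1-kx^{k-1}+(k-1)x^k}{(1-x)^2}$. For the second sum in the denominator, the substitution $i=k-1+j$ turns it into $x^{k-1}\sum_{j=0}^{k-1}(k-j)x^j$, and the standard identity $\sum_{j=0}^{k-1}(k-j)x^j=\frac{k-(k+1)x+x^{k+1}}{(1-x)^2}$ (obtained again as $kS(x)-xS'(x)$ with $S(x)=\sum_{j=0}^{k-1}x^j$) gives a second contribution $\frac{kx^{k-1}-(k+1)x^k+x^{2k}}{(1-x)^2}$. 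Combining these two pieces with the factor $2$ on the other summand, the common $(1-x)^2$ denominators cancel and the resulting expression simplifies to
\[
\frac{1-kx^{k-1}+(k-1)x^k}{2-kx^{k-1}+(k-3)x^k+x^{2k}}
=\frac{kx^k-kx^{k-1}-x^k+1}{kx^k-kx^{k-1}+x^{2k}-3x^k+2},
\]
which, evaluated at $x=1/\varphi_k$, is exactly the claimed limit.

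The only genuine difficulty is conceptual rather than technical: one must verify that in the asymptotic ratio all $n$-dependent and all $g_k$-dependent factors truly cancel. This is guaranteed precisely because $P_k$ and $T_k$ share the \emph{same} denominator $g_k(x)^2$ with the \emph{same} multiplicity at $1/\varphi_k$ (Lemma~\ref{le2}), and because Lemma~\ref{le1} certifies the irreducibility hypothesis of Theorem~4.1 separately for each generating function, so the theorem can be legitimately applied to both. Once this is in place, the remainder is the routine closed-form summation sketched above.
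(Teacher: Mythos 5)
Your proposal is correct and follows essentially the same route as the paper: apply Theorem~4.1 of~\cite{book} to both $P_k(x)$ and $T_k(x)$ (justified by Lemmas~\ref{le1} and~\ref{le2}), note that the common factors $2n\varphi_k^{n+2}$ and $\big(g_k(x)^2\big)''\big|_{x=1/\varphi_k}$ cancel in the quotient, and take the ratio of the numerators at $x=1/\varphi_k$. The only difference is that you carry out explicitly the closed-form summation reducing that ratio to the stated expression, a computation the paper leaves implicit, and your algebra checks out.
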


See Table \ref{tab2} for some numerical estimations of the result
obtained in the previous theorem.  This result involves the
generalized golden ratio.
More than 20 years ago it was conjectured by Wolfram~\cite{wolf} that
the Galois group of the polynomial $x^k - x^{k-1} - \cdots - x^2 - x -
1$ is the symmetric group $S_k$, and so there is no algebraic
expression for $\varphi_k$ (the root of the largest modulus of this
polynomial) when $k \ge 5$. In case of even or prime $k$ the
conjecture was settled by Martin~\cite{martin}.  Cipu and
Luca~\cite{cipu} showed that $\varphi_k$ cannot be constructed by
ruler and compass for $k \ge 3$.  Nevertheless, good approximations
are available, for instance Hare, Prodinger and Shallit~\cite{hare}
expressed $\varphi_k$ and $1/\varphi_k$ in terms of rapidly converging
series.

\medskip

The generalized golden ratio $\varphi_k$ tends to 2 as 
$k$ grows, and we deduce the following.
\begin{cor}
The limit of the expected bit value of binary words
avoiding $k$ consecutive 1s, whose length tends to infinity,
approaches $1/2$ as $k$ grows:

$$
 \lim_{k \to \infty} \lim_{n \to \infty} \frac{v_{n,k}}{n \cdot |\B_n(1^k)|}
 =
 \frac{1}{2}.
 $$
\label{cor}
\end{cor}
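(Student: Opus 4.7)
The plan is to evaluate directly the closed-form limit of Theorem~\ref{main}. Write $x_k := 1/\varphi_k$, so the quantity whose limit we want is
$$L_k \;=\; \frac{k\, x_k^{k} - k\, x_k^{k-1} - x_k^{k} + 1}{k\, x_k^{k} - k\, x_k^{k-1} + x_k^{2k} - 3\, x_k^{k} + 2}.$$
From Dubeau's Theorem~2, cited between Propositions~\ref{pop1} and~\ref{roots}, we have $\varphi_k \to 2$, hence $x_k \to 1/2$. It therefore suffices to show that each of the three quantities $x_k^k$, $x_k^{2k}$, and $k\, x_k^{k-1}$ tends to $0$; the numerator will then tend to $1$ and the denominator to $2$.

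The first step is to establish a uniform bound $x_k \le \rho$ for some $\rho<1$ independent of $k$. For this one needs $\varphi_k$ bounded away from $1$ for $k \ge 2$. This follows from the fact that $\varphi_k$ is non-decreasing in $k$ (since increasing $k$ only adds positive coefficients to the Fibonacci polynomial while enlarging its dominant root toward $2$) together with the explicit value $\varphi_2 = (1+\sqrt{5})/2 > 1$. Hence $x_k \le x_2 = 2/(1+\sqrt{5}) =: \rho < 1$ for all $k \ge 2$.

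The second step is routine dominated-decay estimates: $0 \le x_k^k \le \rho^k \to 0$, $0 \le x_k^{2k} \le \rho^{2k} \to 0$, and $0 \le k\, x_k^{k-1} \le k\, \rho^{k-1} \to 0$, the last because exponential decay dominates polynomial growth. Substituting into $L_k$ yields $L_k \to 1/2$, which is exactly the assertion of the corollary.

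There is no real obstacle here; the only point requiring a moment's thought is the uniform bound $x_k \le \rho < 1$, without which the mixed limit $k \to \infty$ combined with $x_k \to 1/2$ would not a priori kill the terms $k\, x_k^{k-1}$. Everything else is formal substitution in the formula of Theorem~\ref{main}.
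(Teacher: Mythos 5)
Your proof is correct and takes essentially the same route as the paper, which deduces the corollary simply by letting $\varphi_k \to 2$ (Dubeau's result) in the formula of Theorem~\ref{main}; you are just spelling out the limit estimates that the paper leaves implicit. One small remark: the uniform bound $x_k \le \rho < 1$ already follows from $x_k \to 1/2$ (eventually $x_k \le 0.6$, which suffices to kill $k\,x_k^{k-1}$), so the monotonicity of $\varphi_k$ — whose justification you only sketch — is not actually needed.
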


Finally, note that other sets of restricted binary words are counted by the generalized Fibonacci numbers, for instance $q$-decreasing words~\cite{ourfibo} for $q\geq 1$. In this case
every length maximal factor of the form $0^a1^b$ satisfies $a = 0$ or $q \cdot a > b$.
Theorem~\ref{main} and Corollary~\ref{cor}
apply to this case (with the same limit, see~\cite[Corollary 5]{ourfibo})
by setting $k = q + 1$.

\section*{Acknowledgments}

We would like to greatly thank Dietrich Burde, Ted Shifrin, Igor Rivin
and Sil from Mathematics Stack Exchange\footnote{See the original
discussions here
\url{https://math.stackexchange.com/questions/4120185} \\ and here
\url{https://math.stackexchange.com/questions/4125568}.} for
insightful discussions and useful references about the irreducibility
of Fibonacci polynomial which is directly related to
Proposition~\ref{roots}. This work was supported in part by the
project ANER ARTICO funded by Bourgogne-Franche-Comté region (France).

\end{document}